\theoremstyle{plain}
\newtheorem{theorem}{Theorem}
\newtheorem{lemma}[theorem]{Lemma}
\numberwithin{equation}{section}
\let\abs=\envert
\let\vph=\varphi
\newcommand{\mathmod}[1]{\ \left(\mathrm{mod}\ #1\right)}
\newcommand{\rad}{\mathrm{rad}}
\begin{document}
\title{On Lehmer's problem and related problems}
\author{Tomohiro Yamada}
\keywords{Lehmer's problem, Euler's totient function, multiplicative partition.}
\subjclass{Primary 11A25, Secondary 11A05, 11N25.}
\address{Institute for Promotion of Higher Education, Kobe University,
657-0011, 1-2-1, Tsurukabuto, Nada, Kobe, Hyogo, Japan}
\email{tyamada1093@gmail.com}

\date{}

\begin{abstract}
We show that if $N\pm 1=M\vph(N)$ with $N\neq 15, 255$ composite,
then $M<15.76515\log\log\log N$ and $M<16.03235\log\log\omega(N)$,
together with similar results for the unitary totient function, Dedekind function, and the sum of unitary divisors.
\end{abstract}

\maketitle

\section{Introduction}\label{intro}

Let $\vph(N)$ denote the Euler totient function of $N$.
Clearly, $\vph(p)=p-1$ for any prime $p$.
Lehmer~\cite{Leh} conjectured that there exists no composite number $N$ such that
$\vph(N)$ divides $N-1$ and showed that such an integer must be an odd squarefree integer
with at least seven prime factors.
In other words, if $\vph(N)\mid (N-1)$ and $N$ is composite, then $N$ is odd and
$\omega(N)=\Omega(N)\geq 7$, where $\omega(N)$ and $\Omega(N)$ respectively
denote the number of distinct and not necessarily distinct prime factors of $N$.

For such an integer $N$, Cohen and Hagis~\cite{CH} showed that $\omega(N)\geq 14$ and $N>10^{20}$,
Renze's notebook \cite{Ren} shows that $\omega(N)\geq 15$ and $N>10^{26}$,
and Pinch claims that $N>10^{30}$ at his research page \cite{Pin}.
Burcsi, Czirbusz, and Farkas~\cite{BCF} proved that if $3\mid N$, then $\omega(N)\geq 4\times 10^7$ and $N>10^{3.6\times 10^8}$.
Pomerance~\cite{Pom} showed that the number of such integers $N\leq x$ is $O(x^{1/2}\log^{3/4} x)$
and $N\leq r^{2^r}$ if $2\leq \omega(N)\leq r$ additionally.
Luca and Pomerance~\cite{LP} showed that the number of such integers $N\leq x$ is at most
$x^{1/2}/\log^{1/2+o(1)} x$.
Furthermore,
Burek and \.{Z}mija~\cite{BZ} showed that $N\leq 2^{2^r}-2^{2^{r-1}}$ if $\vph(n)$ divides $N-1$
and $2\leq \omega(N)\leq r$.

For integers $N$ such that $N-1=M\varphi(N)$ with $M$ a large integer, stronger results are known.
Hagis proved that if $N-1=3\varphi(N)$, then $\omega(N)\geq 1991$ and $M>10^{8171}$.
For integers $N=M\varphi(N)+1$, $M\geq 4$,
Grytczuk and W\'{o}jtowicz~\cite{GW} showed that
$\omega(N)\geq 3049^{M/4}-1509$ if $3\mid N$
and $\omega(N)\geq 143^{M/4}-1$ otherwise.

Subbarao~\cite{Sub} considered the problem analogous to Lehmer's problem
involving $\vph^*$, the unitary analogue of $\vph$.
So $\vph^*$ is defined by
\begin{equation}
\vph^*(N)=\prod_{p^e\mid\mid N}(p^e-1),
\end{equation}
where the product is over all prime powers unitarily dividing $N$.
We call the value $\vph^*(N)$ the {\it unitary totient} of an integer $N$.
Subbarao conjectured that $\vph^*(N)$ divides $N-1$ if and only if $N$ is a prime power.
This conjecture is still unsolved.
However, Subbarao and Siva Rama Prasad~\cite{SS}
showed that $N$ must have at least eleven distinct prime factors if $N$ is not a prime power
and $\vph^*(N)$ divides $N-1$.
Moreover, Siva Rama Prasad, Goverdhan, and Al-Aidroos~\cite{PGA} proved that for integers
$N=M\varphi^*(N)+1$ with $M\geq 4$,
\vspace{-0.6\baselineskip}{\setlength{\leftmargini}{18pt}
\begin{itemize}
\item[1.] $\omega(N)>(800000)^{M/4}-499883$ and $N>(k_1 M \beta_1^M)^{\beta_1^M}$ if $15\mid N$,
\item[2.] $\omega(N)>(597515)^{M/4}-298668$ and $N>(k_2 M \beta_2^M)^{\beta_2^M}$ if $3\mid N, 5\nmid N$,
\item[3.] $\omega(N)>(1889)^{M/4}-468$ and $N>(k_3 M \beta_3^M)^{\beta_3^M}$ if $3\nmid N, 5\mid N$, and
\item[4.] $\omega(N)>(608)^{M/4}-3$ and $N>(k_4 M \beta_4^M)^{\beta_4^M}$ otherwise,
\end{itemize}
}\vspace{-0.5\baselineskip}
where $(\beta_1, \beta_2, \beta_3, \beta_4)=(23.4, 23.38, 6.1, 4.9)$ and $k_j=(\log \beta_j)/3$ for $j=1, 2, 3, 4$.
We prove the following upper bounds for $M$.
\begin{theorem}\label{th11}
Let $N_1$ denote the product of prime factors $p$ dividing $N$ exactly once here and hereafter.
If $M\vph^*(N)=N\pm 1$, then $M<19.44947\log\log\log N_1$ for $N_1\geq 23$ or $N_1=19$.
Moreover, if $M\vph(N)=N\pm 1$, then $M<15.76515\log\log\log N$ for $N\geq 19$.
\end{theorem}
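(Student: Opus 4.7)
The plan is to bound $M=(N\pm 1)/\vph(N)$ via the identity $M\leq \prod_{p\mid N}p/(p-1)+1/\vph(N)$, controlling the product by explicit Mertens-type estimates while exploiting the coprimality $\gcd(N,\vph(N))=1$ that follows from $M\vph(N)\equiv \pm 1\pmod{N}$. For the Euler totient this forces $N$ to be squarefree; for the unitary analogue $\vph^*$, prime powers $p^e\mid\mid N$ with $e\geq 2$ contribute only a bounded factor $p^e/(p^e-1)\leq 1+2/p^2$ to $N/\vph^*(N)$, so the main term comes from primes dividing $N$ exactly once, whose product is $N_1$.

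The first step would be to split the product at a parameter $y$ and apply the Rosser--Schoenfeld bound $\prod_{p\leq y}(1-1/p)^{-1}<e^\gamma\log y\,(1+1/(2\log^2 y))$ to the primes up to $y$. For the tail, the primes $p\mid N$ with $p>y$ multiply to at most $N$, so their count is at most $\log N/\log y$, and each contributes at most $y/(y-1)$, giving a tail factor of at most $\exp(\log N/((y-1)\log y))$. A naive optimization in $y$ yields only a $\log\log N$ bound, not the claimed $\log\log\log N$.

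Bridging the gap requires a bootstrapping step. Using the identity $\sum_{p\mid N}1/p = \log(N/\vph(N)) + O(1) = \log M + O(1)$, a first-pass bound $M=O(\log\log N)$ forces $\sum_{p\mid N}1/p = O(\log\log\log N)$, which restricts how many small primes can divide $N$, so the tail-count estimate can be sharpened. Reapplying the splitting argument with a refined threshold $y$ should then yield $M = O(\log\log\log N)$. The explicit constants $15.76515$ and $19.44947$ would then emerge from a careful numerical optimization of $y$ combined with the sharpest form of Rosser--Schoenfeld, where $N$ is replaced by $N_1$ in the unitary case.

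The main obstacle is the bootstrapping: upgrading the routine $\log\log N$ bound to $\log\log\log N$ requires carefully combining the arithmetic identity $\sum_{p\mid N}1/p = \log M + O(1)$ with the splitting estimate while tracking explicit constants, so that no more than a constant factor of slack accumulates through the iteration. The $\vph^*$ case introduces no new structural difficulty, but the higher prime powers slightly degrade the optimization, explaining the gap between $15.76515$ and $19.44947$.
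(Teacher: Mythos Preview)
Your bootstrapping step is circular and cannot close the gap from $\log\log N$ to $\log\log\log N$. The identity $\sum_{p\mid N}1/p=\log(N/\vph(N))+O(1)=\log M+O(1)$ is just a restatement of the quantity you are trying to bound, so feeding a first-pass estimate $M=O(\log\log N)$ back in returns $\sum_{p\mid N}1/p=O(\log\log\log N)$, which is again exactly $\log M=O(\log\log\log N)$, i.e.\ $M=O(\log\log N)$. No iteration improves this. More fundamentally, your argument uses the hypothesis $M\vph(N)=N\pm 1$ only to deduce that $N$ is squarefree, and for general squarefree $N$ the bound $N/\vph(N)=O(\log\log\log N)$ is simply false: the primorial $N_y=\prod_{p\leq y}p$ has $N_y/\vph(N_y)\sim e^\gamma\log\log N_y$.

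What you are missing is the arithmetic constraint that actually drives the result. From $\gcd(N,\vph(N))=1$ one gets not only squarefreeness but that the set $U$ of prime divisors of $N$ is \emph{$1$-self-repulsive}: for any two primes $p,q\in U$ one has $q\not\equiv 1\pmod p$. The paper exploits this via the large sieve: each prime $p\in U$ now excludes two residue classes ($0$ and $1$) modulo $p$, and the resulting sieve bound (Theorem~\ref{th21}) gives $\pi_U(x)\,P_U(x)\lesssim 8e^\gamma x/\log x$. Since $\pi_U(x_1)\geq \theta_U(x_1)/\log x_1$ and $\theta_U(x_1)=\log N_1$, one obtains roughly $P_U(x_1)\lesssim 8e^\gamma\log\log x_1$ when $\theta_U(x_1)$ is not too small compared to $x_1$, and then $\log x_1\approx\log\log N_1$ yields $N_1/\vph(N_1)=P_U(x_1)=O(\log\log\log N_1)$. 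A separate partial-summation argument handles the case where $\theta_U(x)$ is small. The self-repulsive property, absent from your outline, is the entire mechanism that breaks the $\log\log N$ barrier; Mertens-type splitting alone cannot do it.
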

\begin{theorem}\label{th12}
If $M\vph^*(N)=N\pm 1$ and $\omega(N_1)\geq 4$, then $M<19.77911\log\log\omega(N_1)$.
Moreover, if $M\vph(N)=N\pm 1$ and $\omega(N)\geq 4$, then $M<16.03235\log\log\omega(N)$.
\end{theorem}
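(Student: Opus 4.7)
The plan is to bound $M$ by estimating $N/\vph(N)$ through Mertens-type inequalities summed over the primes of $N$, and then using the prime number theorem to convert this into a bound involving $\omega(N)$.

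Starting from $M\vph(N)=N\pm 1$, I write
\[
\log M \le \log\frac{N+1}{\vph(N)} = \sum_{p\mid N}\log\frac{p}{p-1} + O\!\left(\frac{1}{N}\right) \le \sum_{p\mid N}\frac{1}{p-1} + O\!\left(\frac{1}{N}\right).
\]
Since the standard Lehmer-type analysis yields that $N$ is odd, every prime divisor of $N$ is at least $3$. Writing $\omega(N)=k$ and letting $p_1<p_2<\cdots<p_k$ denote the primes of $N$, an explicit Mertens bound (Rosser--Schoenfeld style) gives
\[
\sum_{i=1}^{k}\frac{1}{p_i-1} \le \log\log p_k + C
\]
with $C$ an effective constant. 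Combining with a Dusart-type bound of the form $p_k\le k(\log k+\log\log k)$ for $k$ sufficiently large, the right-hand side is bounded by $\log\log\omega(N)+C'$ for an explicit $C'$, yielding a first-pass bound of the form $\log M \le \log\log\omega(N)+C'$.

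To upgrade this to $M<16.03235\log\log\omega(N)$, I plan to iterate using the structural constraint that if $p$ and $q$ are both primes of $N$ with $q\mid p-1$, then $q\mid\vph(N)\mid N\pm 1$ forces $q\nmid N$, a contradiction. Hence the primes of $N$ must avoid the class $1\pmod{q}$ for every other prime $q\mid N$, which by sieve-theoretic arguments (in the spirit of Grytczuk--W\'ojtowicz and the cited work of Siva Rama Prasad et al.) forces the $i$-th prime $p_i$ of $N$ to grow much faster than the $i$-th odd prime. Re-running the preceding Mertens estimate with this refined lower bound on the $p_i$ shrinks the sum sufficiently to recover the claimed constant; the hypothesis $\omega(N_1)\ge 4$ in the unitary case ensures enough primes are present to carry the iteration out.

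The unitary version follows by the same steps applied to $\vph^*$, with the squarefree part $N_1$ playing the role of $N$: the dominant factors of $\vph^*(N)$ are of the form $p-1$ for primes $p$ unitarily dividing $N$ exactly once, so the estimates carry over with $\omega(N_1)$ in place of $\omega(N)$. The slightly worse constant $19.77911$ reflects the modest loss incurred in handling prime powers $p^e\|N$ with $e\ge 2$ via the factors $p^e-1$ in $\vph^*(N)$. The main obstacle is carrying through the sieve / residue iteration with effective constants sharp enough to recover the precise numerical values $16.03235$ and $19.77911$; this will require combining Rosser--Schoenfeld's explicit Mertens estimates with Dusart's effective prime-number-theorem bounds and carefully optimizing error terms at each stage.
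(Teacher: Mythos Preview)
Your first-pass argument yields the wrong order of magnitude. From
\[
\log M \;\le\; \sum_{p\mid N}\frac{1}{p-1}\;\le\;\log\log p_k + C
\]
together with $p_k\le k(\log k+\log\log k)$ you obtain $\log M\le \log\log\omega(N)+C'$, i.e.\ $M\le e^{C'}\log\omega(N)$. The theorem, however, asserts $M<c\,\log\log\omega(N)$, which is one logarithm better. Your proposed ``iteration'' does not close this gap: the Grytczuk--W\'{o}jtowicz and Siva Rama Prasad et al.\ results are of the shape $\omega(N)\ge c_0^{M}$, which inverts only to $M=O(\log\omega(N))$, the same bound your Mertens estimate already gives. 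Saying that the self-repulsive constraint forces the $p_i$ to ``grow much faster'' is not enough; to reach $M=O(\log\log r)$ you would need $\sum_{p\mid N}1/p\le \log\log\log r+O(1)$, which requires, roughly, $p_i\gtrsim i\log i\log\log i$. Nothing in your outline establishes a growth rate of this strength.

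What the paper actually uses is an explicit \emph{large-sieve} inequality for $\pm 1$-self-repulsive sets $U$ (its Theorem~\ref{th21}):
\[
\pi_U(x)\,P_U(x)\;\lesssim\;\frac{8e^{\gamma}x}{\log x},
\]
where $P_U(x)=\prod_{p\in U,\,p\le x}(1-1/p)^{-1}$. This is the key new input; it is \emph{not} derivable from Mertens plus Dusart. From it one gets $P_U(x)\le 8e^{\gamma}\log\log x\cdot(1+o(1))$ whenever $\pi_U(x)\ge x/(\log x\log\log x)$. The paper then carries out a case analysis (depending on whether $\pi_U$ is ``dense'' at $x_1$, and if not, locating the largest $x_2$ where it is and bridging via partial summation) to convert this into $P_U(x_1)\le c\log\log\pi_U(x_1)=c\log\log r$. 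The precise constants $16.03235$ and $19.77911$ come from tracking the explicit error factors in the large-sieve bound through this case analysis. Your proposal is missing this sieve ingredient entirely, and without it the target $\log\log\omega(N)$ bound is out of reach.
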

As Lehmer \cite{Leh} observed, we see that
$M\vph(N)=N\pm 1$ and $\omega(N)\leq 3$, then $N$ must be prime or $N=15, 255$.
Hence, if $M\vph(N)=N\pm 1$ with $N\neq 15, 255$ composite, then $M<15.76515\log\log\log N$
and $M<16.03235\log\log\omega(N)$.

Subbarao~\cite{Sub} also studies similar problems
for Dedekind function $\psi(N)=N\prod_{p^e\mid\mid N}p^{e-1}(p+1)$
and the sum $\sigma^*(N)=\prod_{p^e\mid\mid N}(p^e+1)$ of unitary divisors of $N$.
Clearly, $\sigma^*(N)=N+1$ if and only if $N$ is a prime power.
Moreover, if $\psi(N)=aN+b$ and $\gcd(b, N)=1$ with $a, b$ integers,
then $N$ must be squarefree and $\sigma^*(N)=\psi(N)=aN+b$.
Subbarao proved that if $\sigma^*(N)=MN+1$ with $M>1$ and $\omega(N)=r$,
then $M\geq 3$ must be odd, $r\geq 16$, and $10^{20}<N<(r-1)^{2^{r-1}}$.
Subbarao also proved that if $\psi(N)=MN+1$ with $M>1$ and $3\mid N$, then $\omega(N)\geq 185$.
Hasanalizade~\cite{Has} proved that if $\sigma^*(N)=MN+1$ with $M>1$, then
$N>((\log 2)(AM^2-1)2^{AM^2-1}/3)^{2^{AM^2-1}}$ and $\omega(N)>1578^{AM^2/9}/2$, where
$A=0.998\cdots$, if $3\mid N$ and
$N>((\log 3)M3^{M-1})^{3^M}$ and $\omega(N)>51^{M/3}-1$ if $3\not\mid N$.
We prove the following upper bounds for $M$.
\begin{theorem}\label{th13}
If $\sigma^*(N)=MN\pm 1$, then $M<18.87067\log\log\log N_1$ for $N_1\geq 19$.
Moreover, if $\psi(N)=MN\pm 1$, then $M<15.52051\log\log\log N$ for $N\geq 19$.
\end{theorem}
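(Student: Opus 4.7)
The proof parallels that of Theorems~\ref{th11}--\ref{th12}, replacing $\vph$ by $\psi$ and $\vph^*$ by $\sigma^*$. Equivalently, every factor $(1-1/p)^{-1}$ or $(p^e-1)$ arising in the totient-based identities is replaced by the corresponding Dedekind-type factor $(1+1/p)$ or $(p^e+1)$.

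First I would set up the basic identity. For the $\psi$ case, $\psi(N)=MN\pm 1$ forces $\gcd(\psi(N),N)=1$; since $\psi(N)=\prod_{p^e\mid\mid N}p^{e-1}(p+1)$, this coprimality forces $e=1$ for every prime $p\mid N$, so $N$ must be squarefree. Dividing through by $N$ then gives
\[
M=\prod_{p\mid N}\left(1+\frac{1}{p}\right)\mp\frac{1}{N},
\]
which is the direct Dedekind analogue of the totient identity used in Theorem~\ref{th11}. For the $\sigma^*$ case, since $\sigma^*(N)=\prod_{p^e\mid\mid N}(p^e+1)$, one similarly obtains
\[
M=\prod_{p^e\mid\mid N}\left(1+\frac{1}{p^e}\right)\mp\frac{1}{N}.
\]
As $1+1/p^e\leq 1+1/p^2$ for $e\geq 2$, the contribution of the non-squarefree part is bounded by $\prod_p(1+1/p^2)=\zeta(2)/\zeta(4)$, an absolute constant. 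Thus the analysis reduces to bounding $\prod_{p\mid D}(1+1/p)$, where $D=N$ in the $\psi$ case and $D=N_1$ in the $\sigma^*$ case.

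From here I would replay the argument of Theorem~\ref{th11} with the Mertens-type estimate
\[
\prod_{p\leq y}\!\left(1+\frac{1}{p}\right)=\frac{6e^{\gamma}}{\pi^2}\log y\cdot(1+o(1))
\]
in place of $\prod_{p\leq y}(1-1/p)^{-1}\sim e^{\gamma}\log y$. The two products differ only by the bounded factor $6/\pi^2=1/\zeta(2)$, so the greedy/bootstrap argument that forces the largest prime factor $P$ of $D$ to be small transfers essentially verbatim. The explicit constants $15.52051$ and $18.87067$ then arise from tracking the Rosser--Schoenfeld error terms through the optimisation, in precise analogy with the constants $15.76515$ and $19.44947$ produced by Theorem~\ref{th11}.

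\textbf{Expected obstacle.} The principal technical point, identical to that encountered in Theorem~\ref{th11}, is the bootstrap that upgrades the naive bound $M\leq c\log\log D$ coming from $P\leq\log D$ to the sharper $M\leq c\log\log\log D$ stated in the theorem. This step depends on a careful greedy-type argument that controls the distribution of the prime factors of $D$ using the integrality of $M$ together with the precision of the identity $\prod_{p\mid D}(1+1/p)=M\pm 1/N$. In the unitary case, the subsidiary task of separating $N_1$ from the higher-power part is routine once the basic identity is in hand, since that contribution is uniformly bounded; no new idea beyond those of Theorem~\ref{th11} is required.
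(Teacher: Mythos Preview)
Your high-level plan is correct and matches the paper: one replays the proof of Theorem~\ref{th11} with the factor $e^\gamma/2$ (coming from $\prod_{3\le p\le y}(1-1/p)^{-1}$) replaced by $4e^\gamma(1+1/x)/\pi^2$ (coming from $\prod_{3\le p\le y}(1+1/p)$), and in the unitary case one absorbs the non-squarefree part via the bounded product $\prod_{p\ge 3}(1+1/p^2)=12/\pi^2$, which is exactly how the paper passes from $15.5205$ to $18.87067$.

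However, your account of \emph{why} the argument of Theorem~\ref{th11} works is off, and this matters because it is the one place where something must be checked anew. The passage from a trivial $\log\log N$ bound to the stated $\log\log\log N$ bound in Theorem~\ref{th11} is \emph{not} a greedy argument and does \emph{not} use the integrality of $M$ or the precision of the identity $\prod_{p\mid D}(1\pm 1/p)=M\pm 1/N$. What drives it is Theorem~\ref{th21}: the set $U$ of prime divisors of $N_1$ is $1$-self-repulsive (because $\gcd(N,\vph^*(N))=1$), and the large-sieve inequality of Theorem~\ref{th21} then forces the trade-off $\pi_U(x)P_U(x)\lesssim 8e^\gamma x/\log x$. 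The case analysis on $\theta_U(x)$ versus $x/\log\log x$ in Section~3 converts this into $P_U(x_1)\ll\log\log\log N_1$.

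For Theorem~\ref{th13} you must therefore verify the analogue of the structural input: if $\sigma^*(N)=MN\pm 1$ (or $\psi(N)=MN\pm 1$), then $\gcd(N,\sigma^*(N))=1$, and hence for primes $p,q\in U$ one cannot have $q\equiv -1\pmod p$; i.e.\ $U$ is $(-1)$-self-repulsive. Theorem~\ref{th21} is stated for $\pm 1$-self-repulsive sets precisely so that it applies here unchanged. Once this is noted, the rest is indeed a line-by-line rerun of Section~3 with the numerical adjustments you describe (and with the threshold $e^{73}$ pushed to $e^{95}$ to squeeze out the slightly better constants). Your proposal should name this self-repulsiveness step explicitly; as written, the ``expected obstacle'' paragraph misidentifies the mechanism and would not lead you to the actual proof.
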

\begin{theorem}\label{th14}
If $\sigma^*(N)=MN\pm 1$ and $\omega(N_1)\geq 4$, then $M<19.40333\log\log\omega(N_1)$.
Moreover, if $\psi(N)=MN\pm 1$ and $\omega(N)\geq 4$, then $M<15.72775\log\log\omega(N)$.
\end{theorem}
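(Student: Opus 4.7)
\textbf{Proof plan for Theorem~\ref{th14}.}

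The plan is to mirror the proof of Theorem~\ref{th12}, replacing $1-1/p^{e}$ by $1+1/p^{e}$ in the relevant products and the arithmetic constraint $q\not\equiv 1\pmod{p}$ by $q\not\equiv -1\pmod{p}$; the structure of the argument is otherwise parallel.

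For the $\sigma^{*}$ part, I would begin from the identity
\[
\frac{\sigma^{*}(N)}{N}=\prod_{p^{e}\mid\mid N}\left(1+\frac{1}{p^{e}}\right)=M\pm\frac{1}{N},
\]
and split the product according to whether $e=1$ (i.e.\ $p\mid N_1$) or $e\geq 2$. The second factor is bounded above by $\prod_{p}(1+p^{-2})=\zeta(2)/\zeta(4)$, an absolute constant. Taking logarithms and using $\log(1+x)\leq x$ yields $\sum_{p\mid N_1}1/p\geq\log M-O(1)$. Next, $\sigma^{*}(N)=MN\pm 1$ forces $\gcd(\sigma^{*}(N),N)=1$; since $\sigma^{*}(N)=\prod(p^{e}+1)$, this means that whenever $p,q$ are distinct primes dividing $N_1$ one has $q\not\equiv -1\pmod{p}$. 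A Brun--Hooley-type sieve then bounds the density of primes satisfying all such simultaneous constraints by
\[
\prod_{p\mid N_1}\left(1-\frac{1}{p-1}\right)\leq\exp\!\left(-\sum_{p\mid N_1}\frac{1}{p-1}\right)\leq \frac{C}{M},
\]
and feeding this back into a Mertens-style estimate over the sieved set of primes gives $\sum_{p\mid N_1}1/p\leq(C/M)\log\log\omega(N_1)+O(1)$.

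Combining the two bounds on $\sum 1/p$ yields $M\log M\leq C'\log\log\omega(N_1)$, and after tracking constants via explicit Rosser--Schoenfeld-type estimates (as in the proof of Theorem~\ref{th12}) one obtains the numerical constant $19.40333$; the hypothesis $\omega(N_1)\geq 4$ is precisely what makes the sieve estimate effective. For the $\psi$ part, the result from Subbarao~\cite{Sub} cited in the introduction forces $N$ to be squarefree with $\psi(N)=\sigma^{*}(N)$, so $N_1=N$ and the bounded factor from higher prime powers disappears, sharpening the constant to $15.72775$. The main obstacle is obtaining the sharp Brun--Hooley estimate with explicit constants and carrying out the subsequent numerical optimization to match the claimed constants.
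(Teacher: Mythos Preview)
Your high-level plan---mirror the proof of Theorem~\ref{th12}, replace the congruence constraint $q\not\equiv 1\pmod p$ by $q\not\equiv -1\pmod p$, and control the extra factor $\prod_{p^2\mid N}(1+p^{-2})\le \zeta(2)/\zeta(4)$---is exactly what the paper does. The reduction to $N$ squarefree in the $\psi$ case is also correct. What does not match, and in fact does not work as written, is the middle of your argument.

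You claim that a Brun--Hooley sieve gives a density factor $\prod_{p\mid N_1}(1-1/(p-1))\le C/M$ and that ``feeding this back into a Mertens-style estimate'' yields $\sum_{p\mid N_1}1/p\le (C/M)\log\log\omega(N_1)$, hence $M\log M\le C'\log\log\omega(N_1)$. This last inequality is \emph{stronger} than the theorem, and the derivation is not valid: the sieve (the paper uses the large sieve, Theorem~\ref{th21}, not Brun--Hooley) gives $\pi_U(x)P_U(x)\lesssim 8e^{\gamma}x/\log x$, where $P_U(x)=\prod_{p\in U,\,p\le x}(1-1/p)^{-1}$ depends on $x$. Only at $x=x_1$ (the largest prime of $N_1$) is $P_U(x_1)$ comparable to $M$; for smaller $x$ the factor $P_U(x)$ may be much smaller, so you cannot insert a global $1/M$ into the partial-summation integral for $\sum_{p\in U}1/p$. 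In other words, the step from the pointwise sieve bound to ``$\sum 1/p\le (C/M)\log\log r$'' is the missing idea, and it is precisely the nontrivial content of the proof.

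What the paper actually does (following the template of Theorem~\ref{th12}) is bound $P_U(x_1)$ itself by $C\log\log r$, not $\sum 1/p$ by $(C/M)\log\log r$. This requires a case split on whether $\pi_U(x)\ge x/(\log x\log\log x)$ at $x=x_1$; if so, Theorem~\ref{th21} gives the bound directly. If not, one introduces the threshold $x_2$ where this inequality last holds, bounds $S_U(x_1)-S_U(x_2)$ by partial summation using only $\pi_U(t)\le r$ for $t\ge r\log r$ and the density bound for $t\le r\log r$, and then patches together estimates according to whether $x_2$ is large or small and whether $r\log r\gtrless x_2$. The numerical constants $15.72775$ and $19.40333$ come out of tracking the explicit error terms in Theorem~\ref{th21} and Rosser--Schoenfeld/Dusart through this case analysis, not from a single closed-form optimization.
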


Our upper bounds are eventually stronger than known bounds in the sense of being
at least of triple-exponential and double-exponential order of $M$ for $N$ and $\omega(N)$ respectively.

\section{Explicit sieve estimates}

We write the summatory function of an arithmetic function $f$ for $M_f(x)=\sum_{n\leq x}f(n)$.
For a set $U$ of primes, we put
\begin{equation*}
P_U(x)=\prod_{p\in U, p\leq x}\left(1-\frac{1}{p}\right)^{-1},
S_U(x)=\sum_{p\in U, p\leq x}\frac{1}{p},
\theta_U(x)=\sum_{p\in U, p\leq x}\log p,
\end{equation*}
and $\pi_U(x)=\sum_{p\in U, p\leq x}1$ to be the number of primes in $U$ below $x$.

Given an integer $a$, we call a set $U$ of primes $a$-\textit{self-repulsive} if
for any two primes $p$ and $q$ in $U$, we have $q\not\equiv a\mathmod{p}$.
Studies of $1$-self-repulsive sets of primes have been begun by Golomb~\cite{Gol},
who observed that if $N$ is an integer such that $\gcd(N, \vph(N))=1$ and
$U$ be the set of prime factors of $N$, then, $U$ must be $1$-self-repulsive.
Indeed, we can easily see that if $\gcd(N, \varphi^*(N))=1$ and
$U$ be the set of prime factors of $N$, then, $U$ must be $1$-self-repulsive.

More generally, letting $\vph_a(N)=\prod_{p^e\mid\mid N}(p-a) p^{e-1}$,
we can easily see that if $\gcd(N, \vph_a(N))=1$, then $N$ is squarefree, $\gcd(N, a)=1$, and
the set of prime factors of $N$ must be $a$-self-repulsive.

Using Brun-Selberg upper bound sieve,
Meijer~\cite{Mei}, who used the term $G$-sequence to mean $1$-self-repulsive set,
proved that there exist some absolute constants $c_1$ and $c_2$ such that,
if $U$ is a $1$-self-repulsive set of primes, then
\begin{equation}
\pi_U(x) P_U(x)\leq \frac{c_1 x}{\log x}
\end{equation}
and
\begin{equation}
P_U(x)\leq c_2 \log\log x
\end{equation}
for $x\geq 3$.

Our purpose of this section is to prove the following explicit estimate for $\pm$-self-repulsive sets.
\begin{theorem}\label{th21}
Let $U$ be an $\pm 1$-self-repulsive set of primes.
Then, for $x>e^{73}$, we have
\begin{equation}
\pi_U(x)
<\frac{8e^\gamma x\left(1+\frac{1}{\log x}\right)\left(1+\frac{1}{2\log^3 x}\right)}
{P_U(x)\log x\left(1-\frac{\log\log x-8\gamma}{\log x}\right)^2\left(1-\frac{\log\log x}{\log x}\right)}.
\end{equation}
\end{theorem}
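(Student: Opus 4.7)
The plan is to reduce the problem to an upper-bound sieve. The $\pm 1$-self-repulsion of $U$ means that for any distinct $p, q \in U$ with $p < q$, we have $p \nmid q-1$ and $p \nmid q+1$; fixing a sieving parameter $y \leq x$, every prime $q \in U \cap (y, x]$ therefore satisfies $q \not\equiv \pm 1 \pmod p$ for each $p \in U$ with $p \leq y$. Hence
\[
\pi_U(x) \leq \pi_U(y) + S(y),
\]
where $S(y)$ is the number of primes $q \leq x$ with $q \not\equiv \pm 1 \pmod p$ for every such $p$. One also notes that any nontrivial $\pm 1$-self-repulsive $U$ must avoid $2$ and $3$, so every sieved prime removes exactly two residue classes.

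I would then apply Selberg's upper-bound sieve to $\mathcal{A} = \{q^2-1 : q \leq x \text{ prime}\}$ with sifting density $g(p) = 2/(p-1)$, which yields $S(y) \leq F \cdot \pi(x) \cdot V(y)$ with $V(y) = \prod_{p \in U, p \leq y}(1 - 2/(p-1))$ and $F$ the explicit Selberg sieve constant. The algebraic identity
\[
1 - \tfrac{2}{p-1} \;=\; \Bigl(1-\tfrac{1}{p}\Bigr)^{\!2}\Bigl(1+\tfrac{1}{p(p-2)}\Bigr)^{\!-1}
\]
factorises $V(y)$ as $P_U(y)^{-2}$ times a bounded convergent correction. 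Combining this with Dusart's explicit prime-counting bound $\pi(x) \leq (x/\log x)(1 + 1/\log x)$ and the Rosser--Schoenfeld form of Mertens' third theorem applied to the universal product $\prod_{p \leq x}(1-1/p)^{-1}$, one obtains the key conversion inequality
\[
\frac{P_U(x)}{P_U(y)} \;\leq\; \frac{\log x}{\log y}\,(1 + o(1)),
\]
which trades one power of $P_U(y)^{-1}$ for $P_U(x)^{-1}$ at the cost of a factor $\log x/\log y$. Together with the residual $P_U(y)^{-1}$ from $V(y)$, the bound takes the shape
\[
S(y) \;\leq\; \frac{C\,x}{P_U(x)\log x}\Bigl(\frac{\log x}{\log y}\Bigr)^{\!2}
\]
for an explicit constant $C$.

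Setting $\log y = \log x - \log\log x + 8\gamma$ produces $\log y/\log x = 1 - (\log\log x - 8\gamma)/\log x$, which yields the squared factor in the denominator of the stated bound. The factor $(1 - \log\log x/\log x)$ arises from the Dusart/Rosser--Schoenfeld Mertens error for $\prod_{p \leq x}(1-1/p)^{-1}$, and $(1 + 1/(2\log^3 x))$ is the next-order Dusart correction; the leading constant $8e^\gamma$ combines the explicit Selberg sieve constant (involving $e^\gamma$), Dusart's bound on $\pi(x)$, and the universal Mertens inequality. The main obstacle is purely computational: every constant must be tracked through the Selberg quadratic form, the sieve discrepancy must be controlled explicitly (which forces $y = o(x^{1/2})$ and ultimately dictates the threshold $x > e^{73}$), and the Dusart--Mertens inequalities must be married to the choice of $y$ so that all stated factors emerge in exactly the form claimed. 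I expect the delicate step to be the $P_U(y) \to P_U(x)$ conversion, as one has only universal Mertens estimates and no structural information on $U$ beyond its self-repulsion.
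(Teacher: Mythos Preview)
Your reading of the hypothesis is wrong, and this breaks the sieve setup. In the paper, ``$\pm 1$-self-repulsive'' means that $U$ is $a$-self-repulsive for \emph{some} fixed $a\in\{+1,-1\}$: for all $p,q\in U$ one has $q\not\equiv a\pmod p$, a single forbidden residue. (This is forced by the applications: the prime factors of an $N$ with $M\vph^*(N)=N\pm 1$ are only $1$-self-repulsive, and those of an $N$ with $\sigma^*(N)=MN\pm 1$ only $(-1)$-self-repulsive.) Your setup $\mathcal A=\{q^2-1\}$ assumes \emph{both} residues are forbidden, so $V(y)\sim P_U(y)^{-2}$ is unjustified; with the correct hypothesis only one nonzero class per $p\in U$ is excluded, together with the class $0\pmod p$ for \emph{every} prime $p$ (primality), and this mixed density is exactly what drives the paper's argument. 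Incidentally, $3$ need not be excluded from $U$.

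The paper does not use the Selberg sieve at all. It applies the arithmetic large sieve $Z\le (x+y^2)/M_g(y)$ to the integers in $[1,x]$ with $\rho(p)=2$ for $p\in U$ and $\rho(p)=1$ otherwise, and the entire analytic content lies in the explicit lower bound
\[
M_g(y)\;>\;P_U(y)\,e^{-\gamma}\Bigl(\tfrac{\log y}{2}+2\gamma+\tfrac{0.1}{\log y}\Bigr),
\]
obtained by comparing $\sum_{n\le y}g(n)$ to $\sum_{n\le y}\tau(n)/n$ via a Rankin-type inequality (their Lemma~\ref{lm21}) together with an explicit divisor-sum estimate (their Lemma~\ref{lm22}). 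This $P_U(y)\cdot\log y$ structure, not $P_U(y)^2$, is what produces the stated shape after one conversion $P_U(y)\to P_U(x)$; there is no Selberg constant anywhere.

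Your choice of level is also inconsistent: you correctly note that the error term forces $y=o(x^{1/2})$, yet then set $\log y=\log x-\log\log x+8\gamma$, i.e.\ $y\asymp x/\log x$. The paper instead takes $y=\sqrt{x/\log x}$, so $\log y=(\log x-\log\log x)/2$; the factor $8$ is then $2\cdot 4$ (the $2$ from $M_g(y)\sim e^{-\gamma}P_U(y)\log y/2$, the $4$ from $\log^2 y\sim(\log x)^2/4$), and the $8\gamma$ in the denominator arises from the secondary term $2\gamma\log y$ of the divisor-sum, since $\tfrac{\log^2 y}{2}+2\gamma\log y=\tfrac12\log y\,(\log y+4\gamma)$ and $\log y+4\gamma=\tfrac12(\log x-\log\log x+8\gamma)$. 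None of these constants is recoverable from the sketch you give.
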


Instead of Brun-Selberg sieve, we use the large sieve method as in \cite{FNO}, \cite{Ymd19}, and \cite{Ymd20}.
We write $F=G+O^*(H)$ to mean that $\abs{F-G}\leq H$.
Let $x$ be a positive number
and $A$ be a set of integers contained in an interval of length at most $x$.
For each prime $p$, let $\Omega_p$ be a set of residue classes modulo $p$
and $\rho(p)$ denote the number of residue classes in $\Omega_p$.
We consider the number $Z(A, w, \Omega)$ of integers in $A$
that do not belong to $\Omega_p$ for any prime $p\leq w$.
Hence, if $U$ is self-repulsive, then we take $\Omega_p=\{0, 1\mathmod{p}\}$ for primes $p$ in $U$,
$\Omega_p=\{0\mathmod{p}\}$ for primes $p$ outside $U$,
and $A$ to be the set of positive integers below $x$ to obtain
\begin{equation}\label{eq21}
\pi_U(x)\leq Z(A, w, \Omega)+w
\end{equation}
for any real $w$.

Let $g(m)$ be the multiplicative function supported only on the squarefree integers $m$
defined by $g(p)=\rho(p)/(p-\rho(p))$ for each prime $p$ and
\[M_g(z)=\sum_{n\leq z}g(n).\]
Then, as mentioned in the Introduction,
Theorem 7.14 of \cite{IK} immediately gives the following estimate:
\begin{lemma}\label{lm0}
Assume that $\rho(p)<p$ for any prime $p$.
Then, for any $w\geq 1$ we have
\begin{equation}
Z(A, w, \Omega)\leq \frac{x+w^2}{M_g(w)}.
\end{equation}
\end{lemma}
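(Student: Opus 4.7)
The lemma is essentially an immediate consequence of the arithmetic (Montgomery--Selberg) form of the large sieve, stated as Theorem 7.14 of \cite{IK}. In the notation of that theorem, if $\mathcal{A}$ is a set of integers contained in an interval of length $N$ and, for each prime $p$, $\Omega_p$ is a set of $\omega(p)<p$ residue classes modulo $p$, then
\[
  \#\{a\in\mathcal{A}: a\mathmod{p}\notin\Omega_p\text{ for all primes }p\leq Q\}
  \leq\frac{N+Q^2}{H(Q)},
\]
where
\[
  H(Q)=\sum_{\substack{q\leq Q\\ q\text{ squarefree}}}\prod_{p\mid q}\frac{\omega(p)}{p-\omega(p)}.
\]

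The plan is to invoke this with the dictionary $N=x$, $Q=w$, $\omega(p)=\rho(p)$. Since $g$ is multiplicative and supported on squarefree integers with $g(p)=\rho(p)/(p-\rho(p))$, we have $\prod_{p\mid q}\rho(p)/(p-\rho(p))=g(q)$ for each squarefree $q$, so that
\[
  H(w)=\sum_{\substack{q\leq w\\ q\text{ squarefree}}}g(q)=\sum_{q\leq w}g(q)=M_g(w),
\]
because $g$ vanishes on non-squarefree integers. The hypothesis $\rho(p)<p$ matches $\omega(p)<p$ and, in particular, guarantees that $g(p)$ is finite and positive, so that $M_g(w)\geq g(1)=1>0$ and the ratio on the right is well defined. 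Plugging into the sieve inequality yields precisely $Z(A,w,\Omega)\leq(x+w^2)/M_g(w)$.

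The main obstacle is, in truth, just reconciling the notation: there is no real computation to perform and no estimate of $H(Q)$ needs to be established here---the burden of comparing $M_g(w)$ with related products or partial sums is postponed to whatever subsequent lemma applies this bound in proving Theorem~\ref{th21}. If one wished to avoid citing \cite{IK}, one could reproduce Montgomery's short derivation via the dual large sieve inequality applied to the indicator sequence of the surviving integers, estimating the inner sums $\sum_{a\mathmod{q}}\bigl|\sum_{n\equiv a\,(q)}a_n\bigr|^2$ by restricting $a$ to the complement in $(\mathbb{Z}/q\mathbb{Z})$ of the set of residue classes forbidden by some $\Omega_p$ with $p\mid q$; but direct appeal to Theorem 7.14 of \cite{IK} is exactly what the author has in mind when writing ``immediately gives.''
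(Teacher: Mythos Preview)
Your proposal is correct and follows exactly the paper's approach: the paper simply states that Theorem 7.14 of \cite{IK} ``immediately gives'' the estimate, and your dictionary $N=x$, $Q=w$, $\omega(p)=\rho(p)$, $H(w)=M_g(w)$ is precisely the translation needed. There is nothing to add.
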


So that, our concern is to obtain a lower estimate for $M_g(x)$ with
$\rho(n)=\rho_U(n)$ the multiplicative function supported on squarefree integers defined by
$\rho(p)=2$ for primes $p$ in $U$ and $\rho(p)=1$ for primes $p$ outside $U$.
Our argument is based on the solution of Exercise 1.27 of \cite{Par}.

We begin by the following general inequality for nonnegative multiplicative functions.
\begin{lemma}\label{lm21}
For a multiplicative function $f(n)$ over positive integers, let
$M_{f, U}(x)=\sum_{n\leq x, \gcd(n, U)=1}f(n)$.
In particular, we have $M_f(x)=M_{f, 1}(x)=\sum_{n\leq x}f(n)$.
If $f(n)$ always takes nonnegative value, then
\begin{equation}
M_{f, U}(x)\geq \frac{M_f(x)}{\prod_{p\in U}\sum_{e\geq 0}f(p^e)}.
\end{equation}
\end{lemma}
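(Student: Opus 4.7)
The plan is to exploit the unique factorization of each $n\leq x$ as $n=ab$ with $a$ composed only of primes in $U$ and $b$ coprime to every prime in $U$. First I would use the multiplicativity of $f$ together with $\gcd(a,b)=1$ to write
\[
M_f(x)=\sum_{\substack{a\leq x\\ p\mid a\Rightarrow p\in U}} f(a)\,M_{f,U}(x/a),
\]
splitting the summatory function according to the $U$-part $a$ of $n$ and letting the $b$-sum recombine into $M_{f,U}(x/a)$.

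Next I would invoke the hypothesis $f\geq 0$: it forces $M_{f,U}$ to be nondecreasing, so $M_{f,U}(x/a)\leq M_{f,U}(x)$ for every $a\leq x$. Pulling $M_{f,U}(x)$ out of the sum and then enlarging the range of $a$ to \emph{all} positive integers with prime support in $U$---an enlargement that is free because $f\geq 0$---gives
\[
M_f(x)\leq M_{f,U}(x)\sum_{p\mid a\Rightarrow p\in U} f(a).
\]

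The final step is to recognise the remaining sum as the Euler product
\[
\sum_{p\mid a\Rightarrow p\in U} f(a)=\prod_{p\in U}\sum_{e\geq 0}f(p^e)
\]
by multiplicativity of $f$; dividing then yields the stated bound. (If the product on the right diverges, the claim reduces to $M_{f,U}(x)\geq 0$, which is trivial.) I foresee no real obstacle: the only step that might appear wasteful---replacing $M_{f,U}(x/a)$ by $M_{f,U}(x)$ and dropping the constraint $a\leq x$---is exactly what decouples the $a$- and $b$-sums into an Euler product, and it costs nothing thanks to the nonnegativity of $f$.
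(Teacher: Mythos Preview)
Your argument is correct, but it proceeds along a different line from the paper's. The paper argues by induction on the set $U_0$ of primes in $U$ not exceeding $x$: writing $U_1=U_0\setminus\{p\}$ for some $p\in U_0$, one has
\[
M_{f,U_0}(x)=M_{f,U_1}(x)-\sum_{\substack{n\leq x\\ \gcd(n,U_1)=1\\ p\mid n}}f(n),
\]
and the subtracted sum is bounded by $M_{f,U_0}(x)\sum_{e\geq 1}f(p^e)$, yielding $M_{f,U_0}(x)\geq M_{f,U_1}(x)/\sum_{e\geq 0}f(p^e)$; iterating gives the result. Your approach instead factors each $n\leq x$ once and for all into its $U$-part and its $U$-coprime part, bounds $M_{f,U}(x/a)\leq M_{f,U}(x)$ uniformly, and then recognises the remaining $a$-sum as the full Euler product. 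Your route is more direct and avoids the induction entirely; the paper's route has the minor advantage that it only ever manipulates the finite set of primes below $x$, so convergence of the infinite product never enters the discussion (you handle this correctly by noting the claim is trivial when the product diverges).
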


\begin{proof}
Let $U_0$ be the set of primes in $U$ below $x$.
We prove the lemma by induction of the number of primes in $U_0$.

If $U_0$ is empty, then clearly we have
\begin{equation}
M_{f, U}(x)=M_{f, U_0}(x)=M_f(x)\geq \frac{M_f(x)}{\prod_{p\in U}\sum_{e\geq 0}f(p^e)}.
\end{equation}

Assume that $U_0$ is nonempty and the lemma holds for $U_1=U_0\setminus\{p\}$ and some prime $p$ in $U_0$.
Then we have
\begin{equation}
M_{f, U}(x)=M_{f, U_0}(x)=\sum_{n\leq x, \gcd(n, U_1)=1}f(n)-\sum_{n\leq x, \gcd(n, U_1)=1, p\mid n}f(n).
\end{equation}

Since
\begin{equation}
\sum_{\substack{n\leq x, \\ \gcd(n, U_1)=1, \\ p\mid n}}f(n)
=\sum_{e\geq 1}\sum_{\substack{m\leq x/p^e, \\ \gcd(m, U_0)=1}}f(p^e)f(m)
\leq M_{f, U_0}(x)\sum_{e\geq 1}f(p^e),
\end{equation}
we have
\begin{equation}
M_{f, U_0}(x)\geq M_{f, U_1}(x)-M_{f, U_0}(x)\sum_{e\geq 1}f(p^e),
\end{equation}
that is,
\begin{equation}
M_{f, U_0}(x)\geq \frac{M_{f, U_1}(x)}{\sum_{e\geq 0}f(p^e)}.
\end{equation}

From the assumption, we have
\begin{equation}
M_{f, U}(x)=M_{f, U_0}(x)\geq\frac{M_f(x)}{\prod_{p\in U}\sum_{e\geq 0}f(p^e)}.
\end{equation}
Now the lemma follows by induction.
\end{proof}

\begin{lemma}\label{lm22}
For $y\geq 60$,
\begin{equation}
\sum_{m\leq y}\frac{\tau(y)}{y}>\frac{\log^2 y}{2}+2\gamma\log y+0.4.
\end{equation}
\end{lemma}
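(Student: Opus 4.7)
Reading the summand as $\tau(m)/m$ (the $\tau(y)/y$ displayed in the excerpt must be a typo, since the summation index is $m$), this is the standard divisor sum that yields to Dirichlet's hyperbola method. My plan is first to rewrite
\begin{equation*}
\sum_{m \leq y} \frac{\tau(m)}{m} \;=\; \sum_{ab \leq y} \frac{1}{ab} \;=\; 2 \sum_{a \leq \sqrt{y}} \frac{1}{a}\, H\!\left(\frac{y}{a}\right) - H(\sqrt{y})^{2},
\end{equation*}
where $H(x) := \sum_{n \leq x} 1/n$, and then to feed in the classical explicit expansions
\begin{equation*}
H(x) = \log x + \gamma + \theta_{1}(x), \qquad \sum_{n \leq x} \frac{\log n}{n} = \frac{\log^{2} x}{2} + \gamma_{1} + \theta_{2}(x),
\end{equation*}
where $\gamma_{1}$ is the first Stieltjes constant and $\theta_{1}(x), \theta_{2}(x)$ can be controlled by Rosser--Schoenfeld type bounds of order $O(\log x / x)$.

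Substituting $H(y/a) = \log(y/a) + \gamma + \theta_{1}(y/a)$ and grouping, the $\log y \cdot H(\sqrt{y})$ contribution produces the $\log^{2} y$ and part of the $\gamma \log y$ terms, the $L(\sqrt{y})$ piece subtracts $\log^{2} y/4 + \gamma_{1}$, the $\gamma H(\sqrt{y})$ piece supplies the remaining linear-in-$\log y$ and a $\gamma^{2}$ constant, and finally the square $H(\sqrt{y})^{2}$ trims off $\log^{2} y/4 + \gamma \log y + \gamma^{2}$. The main terms collapse exactly to
\begin{equation*}
\sum_{m \leq y} \frac{\tau(m)}{m} \;=\; \frac{\log^{2} y}{2} + 2\gamma \log y + (\gamma^{2} - 2\gamma_{1}) + E(y),
\end{equation*}
where $E(y)$ aggregates the three error streams coming from $\theta_{1}(\sqrt{y})$, $\theta_{2}(\sqrt{y})$ and $\sum_{a \leq \sqrt{y}} \theta_{1}(y/a)/a$. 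Numerically $\gamma^{2} - 2\gamma_{1} \approx 0.4789$, so the claimed bound is equivalent to $E(y) > -0.079$.

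The main obstacle is precisely this tight margin of roughly $0.08$: for large $y$ the estimates above easily give $|E(y)| = O((\log y)/\sqrt{y})$ which is eventually negligible, but the inequality must hold starting at $y = 60$, where the asymptotic error bound alone may be too loose. I would therefore split the verification in two: use explicit versions of the Rosser--Schoenfeld inequalities to handle $y \geq y_{0}$ for some moderate threshold $y_{0}$, and for $60 \leq y \leq y_{0}$ fall back on a direct numerical computation of $\sum_{m \leq y} \tau(m)/m$ (this sum only jumps at integer $y$, and on each integer value it can be tabulated straightforwardly). The choice of $y_{0}$ will be a bookkeeping exercise balancing the sharpness of the explicit tail bound against the length of the computation, but no new idea is needed beyond the hyperbola identity and standard explicit estimates.
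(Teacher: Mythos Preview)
Your approach is correct and arrives at the same asymptotic expansion
\[
\sum_{m\le y}\frac{\tau(m)}{m}=\frac{\log^2 y}{2}+2\gamma\log y+(\gamma^2-2\gamma_1)+E(y),
\]
with the same constant $\gamma^2-2\gamma_1\approx 0.4789$ that the paper calls $B_0$. The route, however, is different. The paper does \emph{not} apply the hyperbola method to the weighted sum; instead it quotes the explicit divisor-problem bound of Berkane--Bordell\`es--Ramar\'e, $D(w)=w\log w+(2\gamma-1)w+\Delta(w)$ with $|\Delta(w)|\le 0.764\,w^{1/3}\log w$ for $w\ge 9995$, and then obtains $\sum_{m\le y}\tau(m)/m$ from $D$ by partial summation, giving an error of order $(\log y)/y^{2/3}$. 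Your hyperbola argument is more self-contained --- it needs only elementary bounds on $H(x)$ and $\sum_{n\le x}(\log n)/n$ rather than a cited $\Delta(w)$ estimate --- at the price of a weaker error $O((\log y)/\sqrt{y})$, which will push your threshold $y_0$ somewhat higher than the paper's $9995$; but, as you say, the residual range $60\le y\le y_0$ is handled by direct tabulation in either approach. One minor remark: the explicit bounds you need for $\theta_1$ and $\theta_2$ are far more elementary than anything of Rosser--Schoenfeld type (which concerns primes); for instance $0<H(x)-\log x-\gamma<1/\lfloor x\rfloor$ comes straight from integral comparison.
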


\begin{proof}
Put $D(w)=\sum_{n\leq w}\tau(n)$.
Theorem 1.2 of \cite{BBR} gives that for all $w\geq 9995$,
\begin{equation}
\sum_{n\leq w}\tau(n)=w\log w+(2\gamma-1)w+\Delta(w)
\end{equation}
with $\abs{\Delta(w)}\leq 0.764 w^{1/3} \log w$.
Partial summation gives that
\begin{equation}
\begin{split}
\sum_{n\leq z}\frac{\tau(n)}{n}= & ~ \frac{D(z)}{z}+\int_1^z\frac{D(t)}{t^2} dt \\
= & ~ \frac{\log^2 z}{2}+2\gamma\log z+(2\gamma-1)+\frac{\Delta(z)}{z}+\int_1^z \frac{\Delta(t)}{t^2} dt.
\end{split}
\end{equation}

Taking $B_0=2\gamma-1+\int_1^\infty \Delta(t)t^{-2}dt$, we have
\begin{equation}
\begin{split}
\sum_{n\leq z}\frac{\tau(n)}{n}
= & ~ \frac{\log^2 z}{2}+2\gamma\log z+B_0+\frac{\Delta(z)}{z}+\int_z^\infty \frac{\Delta(t)}{t^2}dt \\
= & ~ \frac{\log^2 z}{2}+2\gamma\log z+B_0+O^*\left(\frac{0.191(10\log z+9)}{z^{2/3}}\right)
\end{split}
\end{equation}
for $z\geq 9995$.
Lemma 1 of \cite{RV} had proved that $B_0=\gamma^2-2\gamma_1=0.478809\cdots$ with the error term $O^*(1.641z^{-1/3})$,
where $\gamma_1=-0.072815\cdots$ is the first Stieltjes constant.
We note that in Corollary 2.2 of \cite{BBR} and Lemma 3.3 of \cite{Ram1}, the constant term $B_0$ is
erroneously given as $\gamma^2-\gamma_1$,
which should be $\gamma^2-2\gamma_1$ as in \cite{RV}.
Now calculation gives the lemma.
\end{proof}

Now we would like to show the following lower bound for $M_g(y)$.
\begin{lemma}
For $y>e^{30}$, we have
\begin{equation}
M_g(y)>P_U(y)e^{-\gamma}\left(\frac{\log y}{2}+2\gamma+\frac{0.1}{\log y}\right).
\end{equation}
\end{lemma}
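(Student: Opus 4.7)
The plan is to lower-bound $M_g(y)$ by rewriting it as a sum whose structure matches the divisor-sum estimate of Lemma~\ref{lm22}, so that dividing by $\log y$ recovers the expression $\log y/2+2\gamma+\cdots$ appearing in the claim. One expects the conversion factor between $M_g(y)$ and $\sum_{n\le y}\tau(n)/n$ to be $P_U(y)\,e^{-\gamma}/\log y$, whose origin is Mertens' theorem.

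First, for squarefree $n$ expand $g(p)=(\rho(p)/p)\,(1-\rho(p)/p)^{-1}$ as a geometric series and extend $\rho$ to be completely multiplicative via $\rho(p^e):=\rho(p)^e$. This yields the identity
\[
g(n)=\sum_{m:\,\operatorname{rad}(m)=n}\frac{\rho(m)}{m}\qquad(n\text{ squarefree}),
\]
and summing over squarefree $n\le y$,
\[
M_g(y)=\sum_{\operatorname{rad}(m)\le y}\frac{\rho(m)}{m}\;\ge\;\sum_{m\le y}\frac{\rho(m)}{m}.
\]

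Second, factor $\rho=\mathbf{1}\ast\delta$ (so $\delta=\rho\ast\mu$), so that $\delta$ is multiplicative, supported on $U$-smooth integers, with $\delta(p^e)=2^{e-1}$ for $p\in U$, $e\ge 1$. Interchanging the order of summation,
\[
\sum_{m\le y}\frac{\rho(m)}{m}=\sum_{\substack{d\le y\\ d\text{ is }U\text{-smooth}}}\frac{\delta(d)}{d}\sum_{k\le y/d}\frac{1}{k}.
\]
The governing truncated Euler product $\prod_{p\in U,\,p\le y}(p-1)/(p-2)$ factors via
\[
\frac{p-1}{p-2}=\frac{p}{p-1}\cdot\left(1+\frac{1}{p(p-2)}\right)
\]
as $P_U(y)$ times a convergent product whose Euler factors are all $\ge 1$.

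Third, combine the above with an explicit form of Mertens' theorem (say $\prod_{p\le y}(1-1/p)^{-1}=e^{\gamma}\log y\,(1+o(1))$ with a quantitative error valid for $y>e^{30}$) and the classical bound $\sum_{k\le z}1/k\ge\log z+\gamma-1/(2z)$. Rearranging, one obtains an explicit inequality of the form
\[
M_g(y)\cdot\log y\;\ge\;P_U(y)\,e^{-\gamma}\sum_{n\le y}\frac{\tau(n)}{n}-\varepsilon(y),
\]
with $\varepsilon(y)$ explicitly small. Invoke Lemma~\ref{lm22} to replace $\sum_{n\le y}\tau(n)/n$ by its lower bound $(\log^2 y)/2+2\gamma\log y+0.4$, divide by $\log y$, and absorb the residual $\varepsilon(y)/\log y$ into the degradation $0.4/\log y\to 0.1/\log y$.

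The main obstacle is executing the third step with enough explicit precision: the target constant $e^{-\gamma}/2$ must emerge exactly from the interaction of Mertens' theorem with the truncated Euler product, which requires careful bookkeeping both of the tail of $\prod_{p\in U,p\le y}(p-1)/(p-2)$ (relative to the unrestricted product $\sum_{d,\,U\text{-smooth}}\delta(d)/d$) and of the Mertens error. The threshold $y>e^{30}$ is dictated by the largest of three explicit error terms: from Mertens' theorem, from the truncation of the $U$-smooth $d$-sum, and from Lemma~\ref{lm22} (whose proof already required $y\ge 9995$).
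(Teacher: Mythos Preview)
Your first two steps are sound and in fact subsume the paper's opening: from $\rho=\mathbf 1\ast\delta$ with $\delta(d)\ge 1$ for $U$-smooth $d$ you could recover the paper's inequality $\sum_{m\le y}\rho(m)/m\ge\sum_{k\le y}\tau_U(k)/k$, where $\tau_U(k)$ counts the $U$-smooth divisors of $k$.

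The gap is step~3. You assert that combining your $\delta$-decomposition with Mertens and the harmonic-sum bound yields
\[
M_g(y)\cdot\log y\;\ge\;P_U(y)\,e^{-\gamma}\sum_{n\le y}\frac{\tau(n)}{n}-\varepsilon(y),
\]
but nothing in your setup produces $\sum_{n\le y}\tau(n)/n$: your sum runs over $U$-smooth $d$ with weights $\delta(d)/d$, whereas $\sum\tau(n)/n$ requires a sum over \emph{all} $d\le y$ with weight $1/d$. Nor is there any place where a product over all primes $p\le y$ appears, so the factor $e^{-\gamma}/\log y$ from Mertens has no entry point. The Euler product $\prod_{p\in U}(p-1)/(p-2)$ that you highlight governs the \emph{completed} $\delta$-sum, not the truncated one, and relating the two with uniform control in $U$ is itself nontrivial; in any event it does not convert your expression into the $\tau$-sum.

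The paper closes exactly this gap with Lemma~\ref{lm21}. After writing $\sum_{k\le y}\tau_U(k)/k=\sum_{m\le y}\frac{1}{m}\sum_{d\le y/m,\,d\in V}\frac{1}{d}$, it applies Lemma~\ref{lm21} (with $f(n)=1/n$ and the complement of $U$ playing the role of the excluded set) to obtain
\[
\sum_{\substack{d\le y/m\\ d\in V}}\frac{1}{d}\;\ge\;\Bigl(\prod_{p\le y}\bigl(1-\tfrac{1}{p}\bigr)\Bigr)P_U(y)\sum_{d\le y/m}\frac{1}{d}.
\]
This single step simultaneously introduces $P_U(y)$, creates the product over all primes $\le y$ (whence $e^{-\gamma}/\log y$ via Rosser--Schoenfeld), and turns the inner sum into an unrestricted one, so that summing over $m$ gives precisely $\sum_{n\le y}\tau(n)/n$. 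Lemma~\ref{lm22} then finishes the job, and the factor $(1-1/(2\log^2 y))$ accounts for the degradation $0.4\to 0.1$. Your outline is missing this comparison lemma (or an equivalent device), and without it step~3 does not go through as written.
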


\begin{proof}
We put $\omega_U(n)$ be the number of distinct prime factors in $U$ of $n$,
$\tau_U(n)$ be the number of divisors in $U$ of $n$,
and $\rad(n)=\prod_{p\mid n} p$ be the product of distinct prime divisors of $n$.

We put $V$ to be the set of integers composed only of primes in $U$.
Then, we see that
\begin{equation}
\begin{split}
\sum_{n\leq y}g(n)
= & ~ \sum_{n\leq y}\prod_{p\mid n, p\in U}\frac{2}{p-2}\prod_{p\mid n, p\not\in U}\frac{1}{p-1} \\
= & ~ \sum_{n\leq y}\mu^2(n)\prod_{p\mid n, p\in U}\sum_{e\geq 1}\left(\frac{2}{p}\right)^e\prod_{p\mid n, p\not\in U}\sum_{e\geq 1}\frac{1}{p^e} \\
= & ~ \sum_{n\leq y}\mu^2(n)\sum_{\rad(k)=n}\frac{2^{\omega_U(k)}}{k}
=\sum_{\rad(k)\leq y}\frac{2^{\omega_U(k)}}{k} \\
\geq & ~ \sum_{k\leq y}\frac{\tau_U(k)}{k}
=\sum_{m\leq y}\left(\frac{1}{m}\sum_{d\leq y/m, d\in V}\frac{1}{d}\right).
\end{split}
\end{equation}
By Lemma \ref{lm21}, we have
\begin{equation}
\begin{split}
\sum_{d\leq y/m, d\in V}\frac{1}{d}
\geq & ~ \prod_{p\leq y/m, p\not\in U}\left(1-\frac{1}{p}\right)\sum_{d\leq y/m}\frac{1}{d} \\
\geq & ~ P_U(y) \prod_{p\leq y}\left(1-\frac{1}{p}\right)\sum_{d\leq y/m}\frac{1}{d}.
\end{split}
\end{equation}
Using Theorem 7 of \cite{RS}, we obtain
\begin{equation}
\sum_{d\leq y/m, d\in V}\frac{1}{d}>\frac{\sum_{d\leq y/m} 1/d}{e^\gamma \log y}\left(1-\frac{1}{2\log^2 y}\right)
\end{equation}
and therefore
\begin{equation}
\begin{split}
\sum_{n\leq y}g(n)> & ~ \frac{P(y)\sum_{m\leq y}\left(\frac{1}{m}\sum_{d\leq y/m}\frac{1}{d}\right)}{e^\gamma \log y}\left(1-\frac{1}{2\log^2 y}\right) \\
= & \frac{P_U(y)\sum_{n\leq y}\frac{\tau(n)}{n}}{e^\gamma \log y}\left(1-\frac{1}{2\log^2 y}\right) \\
> & P_U(y)e^{-\gamma}\left(\frac{\log y}{2}+2\gamma+\frac{0.4}{\log y}\right)\left(1-\frac{1}{2\log^2 y}\right) \\
> & P_U(y)e^{-\gamma}\left(\frac{\log y}{2}+2\gamma+\frac{0.1}{\log y}\right).
\end{split}
\end{equation}
\end{proof}

Now we shall prove Theorem \ref{th21}.
Lemma \ref{lm0} immediately gives
\begin{equation}
Z(A, y, \Omega)\leq \frac{x+y^2}{M_g(y)}
<\frac{e^\gamma (x+y^2)}{P_U(y)\left(\frac{\log y}{2}+2\gamma+\frac{0.12}{\log y}\right)}.
\end{equation}
With the aid of Theorem 5.9 of \cite{Dus}, we have
\begin{equation}
\frac{P_U(x)}{P_U(y)}\leq \prod_{y<p\leq x}\frac{p}{p-1}<\frac{\log x}{\log y}\left(1+\frac{1}{5\log^3 y}\right)^2
\end{equation}
(but Ramar\'{e}'s zero density estimate in \cite{Ram2}, on which Dusart's estimates in \cite{Dus} are based,
is objected by \cite{BKLNW}.
Corollary 11.2 in \cite{BKLNW} can instead be used to obtain Dusart's estimates),
and therefore
\begin{equation}\label{eq22}
Z(A, y, \Omega)<
\frac{e^\gamma (x+y^2)\log x}{P_U(x)(\frac{\log^2 y}{2}+2\gamma\log y+0.12)}
\left(1+\frac{1}{5\log^3 y}\right)^2.
\end{equation}

We take $y=\sqrt{x/\log x}$.
We note that $y>e^{30}$ since we have assumed that $x>e^{73}$.
Hence, we observe that
\begin{equation}
\left(1+\frac{1}{5\log^3 y}\right)^2<\frac{\log x}{\log y}\left(1+\frac{0.49}{\log^3 x}\right)
\end{equation}
and \eqref{eq22} yields that
\begin{equation}
Z(A, y, \Omega)
<\frac{8e^\gamma x\left(1+\frac{1}{\log x}\right)\left(1+\frac{0.49}{\log^3 x}\right)}
{P_U(x)\log x\left(1-\frac{\log\log x-8\gamma}{\log x}\right)^2\left(1-\frac{\log\log x}{\log x}\right)}.
\end{equation}
Now Theorem \ref{th21} immediately follows from \eqref{eq21}.

\section{Proofs of Theorems \ref{th11} and \ref{th13}}

In this section, we prove Theorems \ref{th11} and \ref{th13}.
We put $U$ to be the set of prime factors $p$ of $N$ such that $p^2$ does not divide $N$,
so that $N_1=\prod_{p\in U} p$.
As we noted in the last section, $U$ must be $1$-self-repulsive if $M\vph^*(N)=N\pm 1$ and
$(-1)$-self-repulsive if $N=M\sigma^*(N)\pm 1$.

We begin by proving Theorem \ref{th11}.
Assume that $N$ is a positive integer satisfying $M\vph^*(N)=N\pm 1$ for some integer $M\geq 2$.
Let $x_1$ be the largest prime factor of $N_1$.
We note that $P_U(x_1)=\prod_{p\in U} p/(p-1)=N_1/\vph(N_1)$ and $\theta_U(x_1)=\sum_{p\in U}\log p\leq \log N_1$.

We begin by proving that $N_1/\vph(N_1)<15.68996\log\log\log N_1$.
Let $x_0=e^{73}$.
We discuss three cases:
(i) $x_1\leq x_0$,
(ii) $x_1>x_0$, $\theta_U(x_1)\geq x_1/\log\log x_1$, and
(iii) $x_1>x_0$, $\theta_U(x_1)<x_1/\log\log x_1$.
In the case (iii), we put $x_2$ be the largest number $x$ such that $\theta_U(x)\geq x/\log\log x$
and $x_3=\theta_U(x_1)$.
Then we settle four subcases.
(a) $x_3>x_2$ and $x_2\leq x_0$, (b) $x_3>x_2>x_0$,
(c) $x_3\leq x_2\leq x_0$, and (d) $x_3\leq x_2$ and $x_2>x_0$.

In the case (i), putting $p_1$ to be the largest prime such that $\prod_{p\leq p_1} p\leq N_1$,
the Corollary of Theorem 8 in \cite{RS} gives that
\begin{equation}\label{eq30}
\frac{N_1}{\vph(N_1)}\leq P(p_1)<\frac{e^\gamma}{2}\left(\log p_1+\frac{1}{\log p_1}\right)
<15.15486 \log\log p_1,
\end{equation}
where the last inequality follows from the fact that $p_1\leq x_1\leq x_0$ .
If $p_1>500000$, then Theorem 1 of \cite{BKLNW} gives that
$p_1<1.0268\theta(p_1)<1.0268\log N_1$ and we obtain $N_1/\vph(N_1)<15.56102\log\log\log N_1$,
which is more than we desired.
If $p_1<500000$ and $N_1>3704$, then we have $P(p_1)<11.68731<15.68996\log\log\log N_1$.
If $N_1=19$ or $23\leq N_1\leq 3703$, then we can confirm $N_1/\vph(N_1)<7.34789\log\log\log N_1$ by calculation.

Assume that $x_1>x_0$.
As we have seen in the last section, $U$ must be $1$-self-repulsive.
Let $x$ be a real number such that $x_0\leq x\leq x_1$ and $\theta_U(x)\geq x/\log\log x$.
Observing that $\pi_U(x)\geq\theta_U(x)/\log x>x/(\log x\log\log x)$, Theorem \ref{th21} immediately gives that
\begin{equation}\label{eq31}
P_U(x)<\frac{8e^\gamma\left(1+\frac{1}{\log x}\right)\left(1+\frac{1}{2\log^3 x}\right)}
{\left(1-\frac{\log\log x-8\gamma}{\log x}\right)^2\left(1-\frac{\log\log x}{\log x}\right)}\log\log x.
\end{equation}
Since we have assumed that $\theta_U(x)\geq x/\log\log x$,
we have $\log \theta_U(x)>\log x-\log\log\log x$
and therefore $\log\log \theta_U(x)>\log(\log x-\log\log\log x)
>\log\log x-1.01011\log\log\log x/\log x$.
Hence, \eqref{eq31} gives that
\begin{equation}\label{eq32}
P_U(x)<8e^\gamma \delta(\log x)\log\log\theta_U(x),
\end{equation}
where
\begin{equation}
\delta(t)=\frac{\left(1+\frac{1}{t}\right)\left(1+\frac{1}{2t^3}\right)}
{\left(1-\frac{\log t-8\gamma}{t}\right)^2\left(1-\frac{\log t}{t}\right)
\left(1-\frac{1.01011\log\log t}{t\log t}\right)}.
\end{equation}
We see that
\begin{equation}\label{eq33}
\log \delta(t)<\frac{1}{t}+\frac{1}{2t^3}+\frac{3\log t-16\gamma+1.01011(\log\log t)/\log t}{t}+\delta_1(t)
\end{equation}
for $t>73$, where
\begin{equation}\label{eq33a}
\begin{split}
\delta_1(t)= & ~ \frac{(\log t-8\gamma)^2}{t^2(1-\abs{(\log t-8\gamma)/t})}+\frac{\log^2 t}{2t^2(1-(\log t)/t)} \\
< & ~ \frac{1.06245(\log t-8\gamma)^2+0.53123\log^2 t}{t^2}<\frac{0.13552}{t}.
\end{split}
\end{equation}
We can easily see that $1/(2t^2)+1.01011(\log\log t)/\log t<0.34298$
and \eqref{eq33} implies that
\begin{equation}\label{eq33b}
\log\delta(t)<\frac{3\log t-16\gamma+1.34298+0.13552}{t}<\frac{3\log t-7.75695}{t}
\end{equation}
and, observing that $(3\log t-7.75695)/t<0.07007$ for $t>73$,
\begin{equation}\label{eq34}
\begin{split}
\delta(t)< & ~ 1+\frac{3\log t-7.75695}{t}+\frac{(3\log t-7.75695)^2}{2(1-0.07007)t^2} \\
< & 1+\frac{3\log t-7.55957}{t}.
\end{split}
\end{equation}

In the case (ii), taking $x=x_1$, we have
$P_U(x_1)=N_1/\vph^*(N_1)$ and $\theta_U(x_1)=\log N_1$ as we noted above.
Hence, \eqref{eq32} together with \eqref{eq34} yield that
\begin{equation}
\begin{split}
\frac{N_1}{\vph^*(N_1)}< & ~ 8e^\gamma\left(1+\frac{3\log\log x_1-7.55957}{\log x_1}\right)\log\log\log N_1 \\
< & ~ 15.28538\log\log\log N_1.
\end{split}
\end{equation}

Now we settle the remaining case (iii).
If $x_3\geq x_2$, then, partial summation gives
\begin{equation}
\begin{split}
& S_U(x_1)-S_U(x_2)=\frac{\theta_U(x_2)}{x_2\log x_2}-\frac{\theta_U(x_1)}{x_1\log x_1}
+\int_{x_2}^{x_1} \frac{\theta_U(t)(1+\log t)}{t^2 \log^2 t}dt \\
& ~ <\frac{1}{\log x_2\log\log x_2}+\int_{x_2}^{x_3} \frac{\theta_U(t)(1+\log t)}{t^2 \log^2 t}dt
+ x_3\int_{x_3}^{x_1} \frac{1+\log t}{t^2 \log^2 t}dt \\
& ~ <\log\log\log x_3-\log\log\log x_2+\frac{1}{\log x_2\log\log x_2}+\frac{1}{\log x_2},
\end{split}
\end{equation}
where we see that $\theta_U(t)\leq x_3$ for $t\leq x_1$.
Since
\begin{equation}
\log \frac{P_U(x_1)}{P_U(x_2)}<\sum_{x_3<p\leq x_1}\sum_{m=1}^\infty \frac{1}{mp^m}
<S_U(x_1)-S_U(x_2)+\frac{1}{2(x_0-1)},
\end{equation}
we have
\begin{equation}
\frac{P_U(x_1)}{P_U(x_2)}<\frac{\log\log x_3}{\log\log x_2}\exp\left(\frac{1.233076}{\log x_0}\right).
\end{equation}

Now, in the case (a), then, the Corollary of Theorem 8 in \cite{RS} gives that
$P_U(x_2)\leq \prod_{3\leq p\leq x_2}p/(p-1)<(e^\gamma/2)(\log x_2+1/\log x_2)$
and
\begin{equation}\label{eq35}
P_U(x_1)<\frac{e^\gamma \log\log x_3}{2\log\log x_2}\exp\left(\frac{1.233076}{\log x_0}\right)
\left(\log x_2+\frac{1}{\log x_2}\right)
\end{equation}
and $P_U(x_1)<15.41303\log\log\log N_1$, which is more than desired.
In the other case (b), then, taking $x=x_2$ in \eqref{eq32},
we have $P_U(x_2)<8e^\gamma \delta(\log x_2)\log\log \theta_U(x_2)$.
Since $\theta_U(x_2)<(1+10^{-10})x_2$ from Theorem 1 of \cite{BKLNW},
we obtain
\begin{equation}\label{eq36}
\begin{split}
P_U(x_1)< & ~ 8e^\gamma \delta(\log x_2)\left(1+\frac{1.23308}{\log x_2}\right)\log\log x_3 \\
< & ~ 15.54576\log\log x_3
\end{split}
\end{equation}
with the aid of \eqref{eq34}.
This immediately yields that $N_1/\vph(N_1)=P_U(x_1)<15.54576\log\log\log N_1$ as desired.

If $x_3<x_2$, then we have
\begin{equation}
\begin{split}
S_U(x_1)-S_U(x_2)< & ~ \frac{1}{\log x_2\log\log x_2}+ x_3\int_{x_2}^{x_1} \frac{1+\log t}{t^2 \log^2 t}dt \\
< & ~ \frac{1}{\log x_2\log\log x_2}+\frac{1}{\log x_2}.
\end{split}
\end{equation}
In the case (c), like above, we have
\begin{equation}
P_U(x_2)\leq P(x_2)<\frac{e^\gamma}{2}\left(\log x_2+\frac{1}{\log x_2}\right)
\end{equation}
and therefore
\begin{equation}\label{eq37}
P_U(x_1)<\frac{e^\gamma}{2}\left(\log x_2+\frac{1}{\log x_2}\right)\exp\left(\frac{1.233076}{\log x_2}\right).
\end{equation}
We observe that $\log N_1=\theta_U(x_1)\geq \theta_U(x_2)\geq x_2/\log\log x_2$
and therefore $N_1/\vph(N_1)=P_U(x_1)<15.63054\log\log\log N_1$.
In the case (d), taking $x=x_2$ in \eqref{eq32} and proceeding as above, we obtain
\begin{equation}\label{eq38}
P_U(x_1)<8e^\gamma \delta(\log x_2)\left(1+\frac{1.23308}{\log x_3}\right)\log\log x_2.
\end{equation}
Observing that $\log N_1\geq x_2/\log\log x_2$ with $x_2\geq x_0$ and using \eqref{eq34}, we have
$N_1/\vph(N_1)<15.76514\log\log\log N_1$.

Hence, we have $N_1/\vph(N_1)<15.76514\log\log\log N_1$ in any case and conclude that
\begin{equation}
M\leq\frac{N+1}{\vph^*(N)}\leq\frac{1}{N}+\frac{N_1}{\vph(N_1)}\prod_{p^2\mid N}\frac{p^2}{p^2-1}
<19.44947\log\log\log N_1.
\end{equation}
Moreover, if $M\vph(N)=N\pm 1$, then $N=N_1$ and therefore $M=(N\pm 1)/\vph(N)<15.76515\log\log\log N$,
which completes the proof of Theorem \ref{th11}.

We can prove Theorem \ref{th13} in a quite similar way with $x_0=e^{95}$ instead of $e^{73}$.
If $\sigma^*(N)=MN+1$, then $U$ must be $(-1)$-self-repulsive.
If $x_1\leq x_0$, then, putting $p_1$ to be the largest prime such that $\prod_{p\leq p_1} p\leq N_1$,
we have
\begin{equation}
\frac{\psi(N_1)}{N_1}\leq \prod_{p\leq p_1}\frac{p+1}{p}
<\frac{4 e^\gamma(1+1/p_1)}{\pi^2}\left(\log p_1+\frac{1}{\log p_1}\right)
\end{equation}
and, proceeding as above, we obtain $\psi(N_1)/N_1<15.76515\log\log\log N_1$, provided that $N_1\geq 19$.

Now assume that $x_1>x_0$.
Let $x$ be a real number such that $x_0\leq x\leq x_1$ and $\theta_U(x)\geq x/\log\log x$.
Proceeding as above, we obtain \eqref{eq32} and then \eqref{eq33} with $1.00807$ in place of $1.01023$
and, for $t>95$,
\begin{equation}
\begin{split}
\delta_1(t)<\frac{1.05035(\log t-8\gamma)^2+0.52518\log^2 t}{t^2}<\frac{0.11669}{t}
\end{split}
\end{equation}
instead of \eqref{eq33a}.
We can easily see that $1/(2t^2)+1.00807(\log\log t)/\log t<0.33564$
to obtain $\log\delta(t)<(3\log t-7.78512)/t<0.06186$ for $t>95$ instead of \eqref{eq33b}.
Hence, we obtain \eqref{eq34} with $7.59129$ replaced by $7.55957$.

In the case $\theta_U(x_1)\geq x_1/\log\log x_1$, like above, we have
\begin{equation}
\begin{split}
\frac{\psi(N_1)}{N_1}< & ~ 8e^\gamma\left(1+\frac{3\log\log N_1-7.59129}{\log N_1}\right)\log\log\log N_1 \\
< & ~ 15.15904\log\log\log N_1.
\end{split}
\end{equation}

If $x_3>x_2$ and $x_2\leq x_0$, then we have
\eqref{eq35} with $4(1+1/x_2)e^\gamma/\pi^2$ in place of $e^\gamma/2$,
which gives $P_U(x_1)<15.25485\log\log\log N_1$.
If $x_3>x_2>x_0$, then we have $P_U(x_1)<15.35491\log\log\log N_1$ instead of \eqref{eq36}.

If $x_3\leq x_2\leq x_0$, then 
we obtain \eqref{eq37} with $4(1+1/x_2)e^\gamma/\pi^2$ in place of $e^\gamma/2$
and then $P_U(x_1)<15.41935\log\log\log N_1$.
If $x_3\leq x_2$ and $x_2>x_0$, then we have \eqref{eq38}
and $P_U(x_1)<15.5205\log\log\log N_1$.

Hence, we have $\psi(N_1)/N_1<15.5205\log\log\log N_1$ in any case and therefore
\begin{equation}
M\leq\frac{\sigma^*(N)+1}{N}\leq\frac{1}{N}+\frac{\psi(N_1)}{N_1}\prod_{p^2\mid N}\frac{p^2+1}{p^2}
<18.87067\log\log\log N_1.
\end{equation}
Moreover, if $\psi(N)=MN\pm 1$, then $N=N_1$ and therefore $M=(\psi(N_1)\pm 1)/N_1<15.52051\log\log\log N$,
which completes the proof of Theorem \ref{th13}.

\section{Proofs of Theorems \ref{th12} and \ref{th14}}

We put $U$ to be the set of prime factors $p$ of $N$ such that $p^2$ does not divide $N$,
so that $N_1=\prod_{p\in U} p$.
Moreover, we put $r=\omega(N_1)\geq 4$.

We begin by proving Theorem \ref{th12}.
Our argument is similar to the proof of Theorem \ref{th11} in the last section but needs some modification.
Let $x_0=e^{72}$.
We discuss three cases:
(i) $x_1\leq x_0$,
(ii) $x_1>x_0$, $\pi_U(x_1)\geq x_1/(\log x_1 \log\log x_1)$, and
(iii) $x_1>x_0$, $\pi_U(x_1)>x_1/(\log x_1 \log\log x_1)$.
Moreover, in the case (iii), we put $x_2$ be the largest number $x$
such that $\pi_U(x)\geq x/(\log x\log\log x)$
and settle four subcases.
(a) $r\log r>x_2$ and $x_2\leq x_0$, (b) $r\log r>x_2>x_0$,
(c) $r\log r\leq x_2\leq x_0$, and (d) $r\log r\leq x_2$ and $x_2>x_0$.

In the case (i), then, putting $p_2$ to be the $r$-th odd prime, we have
\begin{equation}\label{eq40}
\frac{N_1}{\vph(N_1)}\leq P(p_2)<\frac{e^\gamma}{2}\left(\log p_2+\frac{1}{\log p_2}\right).
\end{equation}
where we use the fact that $p_2\leq x_1\leq x_0$.
If $4\leq r<e^{16}$, then we have $N_1/\vph(N_1)\leq P(p_2)<7.366803\log\log r$, which is more than we have desired.
If $r>e^{16}$, then we can derive from Proposition 5.15 of \cite{Dus} that
$\log\log p_2<1.059704\log\log r$ and \eqref{eq40} yields that
$N_1/\vph(N_1)<15.89085\log\log r$.

Now assume that $x_1>x_0$.
Let $x$ be a real number such that $x_0\leq x\leq x_1$ and $\pi_U(x)\geq x/(\log x \log\log x)$.
We observe that
\begin{equation}
\begin{split}
\log\log \pi_U(x)> & ~ \log(\log x-\log\log x-\log\log\log x) \\
> & ~ \log\log x-\frac{1.04204(\log\log x+\log\log\log x)}{\log x}.
\end{split}
\end{equation}
As in the last section, we obtain instead of \eqref{eq32},
\begin{equation}\label{eq41}
P_U(x)<8e^\gamma \eta(\log x)\log\log\pi_U(x),
\end{equation}
where $\eta(t)$ is defined as $\delta(t)$ with $1.04204(\log t+\log\log t)$ in place of \\
$1.01011\log\log t$.
Proceeding as in the last section, with $1.06315$, $0.53158$, $1.04214$, and $0.13736$
in place of $1.06245$, $0.53123$, $0.34298$, and $0.13622$ respectively, we have
$\log\eta(t)<(3\log t-7.05655)/t$
and, observing that $(3\log t-7.05655)/t<0.08019$, $\eta(t)<(3\log t-6.80452)/t$.

Hence, in the case (ii), we have
\begin{equation}
\frac{N_1}{\vph^*(N_1)}<8e^\gamma\left(1+\frac{3\log\log x_1-6.80452}{\log x_1}\right)\log\log\pi_U(x_1)
\end{equation}
and, since $\pi_U(x_1)=\omega(N_1)$, we conclude that
\begin{equation}
\frac{N_1}{\vph^*(N_1)}<15.44101\log\log\omega(N_1).
\end{equation}

Now we settle the remaining case (iii).

If $r\log r>x_2$, then, partial summation gives
\begin{equation}
\begin{split}
& S_U(x_1)-S_U(x_2)=\frac{\pi_U(x_2)}{x_2}-\frac{\pi_U(x_1)}{x_1}
+\int_{x_2}^{x_1} \frac{\pi_U(t)}{t^2}dt \\
& ~ <\frac{1}{\log x_2\log\log x_2}+\int_{x_2}^{r\log r} \frac{\pi_U(t)}{t^2}dt
+ r\int_{r\log r}^{x_1} \frac{dt}{t^2} \\
& ~ <\log\log\log (r\log r)-\log\log\log x_2+\frac{1}{\log x_2\log\log x_2}+\frac{1}{\log r}
\end{split}
\end{equation}
and
\begin{equation}
\frac{P_U(x_1)}{P_U(x_2)}<\frac{\log\log (r\log r)}{\log\log x_2}(1+\epsilon(x_2, r)),
\end{equation}
where
\begin{equation}
1+\epsilon(x_2, r)=\exp\left(\frac{1}{\log x_2\log\log x_2}+\frac{1}{\log r}+\frac{1}{2(x_2-1)}\right).
\end{equation}

Like above, in the case (a), observing that
$P_U(x_2)<(e^\gamma /2)(\log x_2+1/\log x_2)$ we have
\begin{equation}\label{eq42}
\begin{split}
P_U(x_1)< \frac{e^\gamma\log\log (r\log r)}{2\log\log x_2}\left(\log x_2+\frac{1}{\log x_2}\right)
(1+\epsilon(x_2, r))
\end{split}
\end{equation}
and therefore $N_1/\vph(N_1)=P_U(x_1)<15.48645\log\log r$.
In the case (b), we have
\begin{equation}\label{eq43}
\begin{split}
P_U(x_1)<8e^\gamma \eta(\log x_2)(1+\epsilon(x_2, r))\log(\log r+\log\log r)
\end{split}
\end{equation}
and $N_1/\vph(N_1)<15.94648 \log\log r$.

If $r\log r\leq x_2$, then we have
\begin{equation}
\begin{split}
S_U(x_1)-S_U(x_2)< & ~ \frac{1}{\log x_2\log\log x_2}+ r\int_{x_2}^{x_1} \frac{dt}{t^2}dt \\
< & ~ \frac{1}{\log x_2\log\log x_2}+\frac{1}{\log r}.
\end{split}
\end{equation}

In the case (c), then, as in the last section, we have
\begin{equation}\label{eq44}
P_U(x_1)<\frac{e^\gamma}{2}(1+\epsilon(x_2, r))\left(\log x_2+\frac{1}{\log x_2}\right).
\end{equation}
Observing that $r\geq \pi_U(x_2)\geq x_2/(\log x_2 \log\log x_2)$,
we have
$N_1/\vph(N_1)<15.56984 \log\log r$.
In the case (d), taking $x=x_2$ in \eqref{eq41} and proceeding as above, we obtain
\begin{equation}\label{eq45}
P_U(x_1)<8e^\gamma \eta(\log x_2)(1+\epsilon(x_2, r))\log\log x_2.
\end{equation}
We observe that $r\geq \pi_U(x_2)\geq x_2/(\log x_2 \log\log x_2)$ with $x_2>x_0$,
to obtain $N_1/\vph(N_1)<16.03234 \log\log r$.

Hence, we obtain $N_1/\vph(N_1)<16.03234\log\log r$ in any case.
As in the last section, we have $M\leq (N+1)/\vph^*(N)<19.77911\log\log r$
and, if $M\vph(N)=N\pm 1$, $M<16.03235\log\log r$, proving Theorem \ref{th12}.

Now we prove Theorem \ref{th14}.
If $x_1\leq e^{93}$, then, putting $p_2$ to be the $r$-th odd prime, we have
\begin{equation}
\frac{\psi(N_1)}{N_1}<\frac{4e^\gamma(1+1/p_2)}{\pi^2}\left(\log p_2+\frac{1}{\log p_2}\right)
\end{equation}
instead of \eqref{eq40} and $\psi(N_1)/N_1<15.69684\log\log r$.

Now assume that $x_1>e^{93}$.
Let $x$ be a real number such that $e^{93}\leq x\leq x_1$ and $\pi_U(x)\geq x/(\log x \log\log x)$.
Like above but replacing $1.04204$, $1.06315$, $0.53158$, $1.04214$, $0.13736$, $-7.05655$, and $0.08019$
by $1.03398$, $1.05124$, \\ $0.52562$, $1.03404$, $0.1162$, $-7.08521$, and $0.07003$,
we obtain \eqref{eq41} with $\eta(t)<(3\log t-6.8383)/t$.

If $\pi_U(x_1)>x_1/(\log x\log\log x)$, then, proceeding as above,
we have $P_U(x_1)<15.28421\log\log r$.

If $r\log r\geq x_2$ and $x_2\leq e^{93}$, then we have
\eqref{eq42} with $4(1+1/x_2)e^\gamma/\pi^2$ in place of $e^\gamma/2$,
which gives $P_U(x_1)<15.18184\log\log r$.
If $r\log r\geq x_2>e^{93}$, then we have $P_U(x_1)<15.66533\log\log r$ instead of \eqref{eq43}.

If $r\log r<x_2\leq e^{93}$, then 
we obtain \eqref{eq44} with $4(1+1/x_2)e^\gamma/\pi^2$ in place of $e^\gamma/2$
and then $P_U(x_1)<15.24232\log\log r$.
If $r\log r<x_2$ and $x_2\geq e^{93}$, then we have \eqref{eq45}
and $P_U(x_1)<15.72774\log\log r$.

Now we have confirmed that $\psi(N_1)/N_1<15.72774\log\log r$ in any case.
Hence, we obtain $M\leq (\sigma^*(N)+1)/N<19.40333\log\log r$
and, if $\psi(N)=MN\pm 1$, $M<15.72775\log\log r$.
This completes the proof.

{}
\end{document}